\newtheorem{theorem}{Theorem}[section]
\newtheorem{lemma}[theorem]{Lemma}
\newtheorem{proposition}{Proposition}
\newtheorem{remark}{Remark}
\begin{document}
 \begin{center}      
 ON THE DECOUPLING OF THE IMPROVED BOUSSINESQ EQUATION INTO TWO UNCOUPLED CAMASSA-HOLM EQUATIONS
 \end{center}
 






\centerline{\scshape H.A. Erbay$^*$ and S. Erbay}
\medskip
{\footnotesize
 \centerline{ Department of Natural and Mathematical Sciences, Faculty of Engineering,}
   \centerline{Ozyegin University,}
   \centerline{ Cekmekoy 34794, Istanbul, Turkey}
} 

\medskip

\centerline{\scshape A. Erkip}
\medskip
{\footnotesize
 \centerline{ Faculty of Engineering and Natural Sciences,}
   \centerline{Sabanci University,}
   \centerline{Tuzla 34956,  Istanbul,    Turkey}

}

\bigskip


\begin{abstract}
  We rigorously establish that, in the long-wave regime characterized by the assumptions of long wavelength and small amplitude, bidirectional solutions of the improved Boussinesq equation tend to associated solutions of two uncoupled Camassa-Holm equations. We give a precise estimate for  approximation errors in terms of two small positive parameters measuring the effects of nonlinearity and dispersion. Our results demonstrate that, in the present regime, any solution of the improved Boussinesq equation is split into two waves propagating in opposite directions independently, each of which is governed by the Camassa-Holm equation. We observe that the approximation error for the decoupled problem considered in the present study is greater than the approximation error for the unidirectional problem characterized by a single Camassa-Holm equation.   We also consider lower order approximations and we state similar error estimates for both the Benjamin-Bona-Mahony approximation and the Korteweg-de Vries approximation.
\end{abstract}

\section{Introduction}\label{sec1}

In this study, we consider the improved Boussinesq (IB) equation
\begin{equation}
    u_{tt}-u_{xx}-\delta^{2}u_{xxtt}-\epsilon (u^{2})_{xx}=0,  \label{ib}
\end{equation}
which appears as a relevant model in various areas of physics (see, e.g. \cite{ostrovskii,soerensen,duruk2009} for solid mechanics), and we proceed along our analysis of the Camassa-Holm (CH)  approximation of the IB equation initiated in \cite{erbay2016}. In (\ref{ib}), $u=u(x,t)$ is a  real-valued function, and $\epsilon $ and $\delta $ are two small positive parameters measuring the effects of nonlinearity and dispersion, respectively.  In \cite{erbay2016}, by a proper choice of initial data, we  restricted our attention to the right-going solutions of the IB equation and showed that, for small amplitude long waves, they are well approximated by associated solutions of a single CH equation  \cite{camassa}. In the present study we remove the assumption about the solutions being unidirectional and consider solutions traveling in both directions with  general initial disturbances. We then show that, in the long-wave regime, solutions of the IB equation can be split into two counter-propagating parts up to a small error. To be more precise, it is shown that any solution of the IB equation is  well approximated by the sum $w^{+}+w^{-}$ of solutions of two uncoupled CH equations
\begin{align}
 &  w^{+}_t+ w^{+}_x+ \epsilon w^{+}w^{+}_x -\frac{3}{4}\delta^{2}w^{+}_{xxx}-{\frac{5}{4}}\delta^{2}w^{+}_{xxt}
             -{\frac{3}{4}}\epsilon \delta^{2}(2 w^{+}_x w^{+}_{xx}+ w^{+} w^{+}_{xxx})=0,  \label{ch-r}\\
 &  w^{-}_t- w^{-}_x- \epsilon w^{-} w^{-}_x +\frac{3}{4}\delta^{2} w^{-}_{xxx}-{\frac{5}{4}}\delta^{2}w^{-}_{xxt}
              +{\frac{3}{4}}\epsilon \delta^{2}(2 w^{-}_x w^{-}_{xx}+ w^{-} w^{-}_{xxx})=0,  \label{ch-l}
\end{align}
where $w^{+}$ and $w^{-}$ denote the right and left going waves, respectively.  We mainly establish existence, consistency and convergence results for the CH approximation of the IB equation in the decoupled case. We prove the decomposition and give the convergence rate between bounded solutions of the IB equation and the sum of two counter-propagating solutions of uncoupled CH equations. We observe that the approximation errors remain small in suitable norms over an arbitrarily long time interval.     We also give error estimates for the Benjamin-Bona-Mahony (BBM) and  Korteweg-de Vries (KdV) approximations of the IB equation  in the same setting, where  $w^{+}$ and $w^{-}$ are solutions of the two uncoupled   BBM  equations \cite{bbm} or KdV equations \cite{korteweg}.

The KdV, BBM  and CH  equations arise as formal asymptotic models for unidirectional propagation of weakly nonlinear and weakly dispersive waves in  a variety of physical situations. Recently, there has been a growing interest to rigorously relate solutions of the asymptotic equations to solutions of the parent equations of original physical problem. For instance, in the context of water waves, the KdV, BBM and CH equations have been rigorously justified as  unidirectional asymptotic models of the water wave equations in \cite{craig1985}, \cite{alazman2006} and \cite{constantin2009}, respectively (the reader is referred to \cite{lannes} for a detailed discussion of the water waves problem). In the case of bidirectional propagation of small amplitude long waves, an uncoupled system of two KdV equations, one for waves moving to the left and one for waves moving to the right, appears as the simplest asymptotic model of  the underlying physical problem.  In  \cite{schneider2000}, \cite{bona2005}, \cite{wright2005}, it was proven that bidirectional, small amplitude, long-wave solutions of the water wave problem are well approximated by combinations of solutions of two uncoupled KdV equations.  In \cite{duchene2014}, a similar justification framework was used for an uncoupled system of two CH equations once again in the water wave setting.

In this paper, attention is given to the IB equation that describes the time evolution of nonlinear dispersive waves in many practically important situations. In \cite{schneider1998} and \cite{wayne2002}, the validity of the uncoupled KdV system  was established  as a leading order approximation for long wavelength solutions of the IB equation. In the present work we extend  the analysis to moderate amplitudes by considering an uncoupled system of two CH equations  as a leading order approximation to the IB equation  in the long wave regime and provide an  estimate for the approximation error. As a by-product, we also recover both the uncoupled KdV system and the uncoupled BBM system as the leading approximations of the IB equation. We believe that the study of the IB equation provides a useful step in understanding long-wave limits of the evolution equations modeling much more complicated physical situations. For a mathematical description of the long-wave limit of unidirectional solutions of the IB equation by a single CH equation we refer to \cite{erbay2016} (see \cite{erbay2015} for the formal derivation of the CH equation from the IB equation).  As in  \cite{erbay2016} the methodology used in this study adapts the techniques in \cite{bona2005,constantin2009,gallay2001}. Since the proofs in the present work are somewhat parallel with the proofs in \cite{erbay2016}, we will  present the new ingredients only.

Several points are worth emphasizing briefly. First,   we remind that  the system of uncoupled CH equations (\ref{ch-r}) and (\ref{ch-l}) can be written in a more standard form by means of the following coordinate transformations
\begin{equation}
    \overline{x}={\frac{2}{\sqrt 5}}(x-{\frac{3}{5}}t), \quad \overline{y}={\frac{2}{\sqrt 5}}(x+{\frac{3}{5}}t), \quad \overline{t}={\frac{2}{3 \sqrt 5}}t .
\end{equation}
 Then, (\ref{ch-r}) and (\ref{ch-l}) become
\begin{align}
 &   v^{+}_{\bar{t}}+ {\frac{6}{5}}  v^{+}_{\bar{x}}+ 3\epsilon  v^{+}  v^{+}_{\bar{x}}
        -\delta^2  v^{+}_{\bar{t}\bar{x}\bar{x}} -{\frac{9}{5}}\epsilon \delta^2(2  v^{+}_{\bar{x}}  v^{+}_{\bar{x}\bar{x}}
        +  v^{+}  v^{+}_{\bar{x}\bar{x}\bar{x}})=0,  \label{vr}  \\
 &   v^{-}_{\bar{t}}-{\frac{6}{5}}  v^{-}_{\bar{y}}- 3\epsilon  v^{-}  v^{-}_{\bar{y}}
    -\delta^2  v^{-}_{\bar{t}\bar{y}\bar{y}} +{\frac{9}{5}}\epsilon \delta^2(2  v^{-}_{\bar{y}}  v^{-}_{\bar{y}\bar{y}}
    +  v^{-}  v^{-}_{\bar{y}\bar{y}\bar{y}})=0,  \label{vl}
\end{align}
with $ v^{+}(\overline{x},\overline{t})=w^{+}(x,t)$ and $v^{-}(\overline{y},\overline{t})=w^{-}(x,t)$, respectively. We also remind that,  using  the scaling transformation $V^{+}(X,\tau) =\epsilon v^{+}(\bar{x},\bar{t})$, $~V^{-}(Y,\tau) =\epsilon v^{-}(\bar{y},\bar{t})$, $~(\bar{x},\bar{y})=\delta (X,Y)$, and  $\bar{t}=\delta \tau$, we can rewrite (\ref{vr}) and (\ref{vl})  in more standard forms with no parameters. Secondly, we observe that the approximation error for the decoupled problem considered in the present study is greater than the approximation error for the unidirectional problem characterized by a single CH equation in \cite{erbay2016}. This deterioration is partially related to the error due to approximate splitting of the initial data for the IB equation. Another factor is due to the fact that the interaction of the right-going and the left-going waves  appears to play a major role in the residual term that arises when we plug the solutions of the uncoupled CH equations into the IB equation. We emphasize that the coupled models for which the interaction terms are not supposed to be small,  provide a better description than the decoupled ones over short time scales and that a rigorous justification of this  claim  remains as an open problem.

The remainder of this paper is organized as follows. First, in Section 2, we focus on a description of the problem setting for approximation errors. In Section 3, we conduct a preliminary discussion of uniform estimates for the solutions of the CH equation and we estimate the residual term that arises when we plug the sum of solutions of the uncoupled CH equations into the IB equation.  In Section 4, using the energy estimate based on certain commutator estimates, we obtain the convergence rate between the solutions of the IB equation and the sum of solutions of the uncoupled CH equations. In Section 5 we recover the BBM  and KdV approximations of the IB equation in the decoupled case.

Our notation  for  function spaces is fairly standard. The notation $\| u\|_{L^p}$ denotes the $L^p$ ($1\leq p <\infty$)  norm of $u$ on   $\mathbb{R}$. The symbol $\big\langle u, v\big\rangle$ represents the inner product of $u$ and $v$ in $L^2$.   The notation  $H^{s}=H^s(\mathbb{R})$ denotes the $L^{2}$-based Sobolev space of order $s$ on $\mathbb{R}$, with the norm $\| u\|_{H^{s}}=\big(\int_\mathbb{R} (1+\xi^2)^s |\widehat u(\xi)|^2 d\xi \big)^{1/2}$.  We will drop the symbol $\mathbb{R}$ in $\int_{\mathbb{R}}$.  The symbol $C$ will stand for a generic positive constant. Partial differentiations are denoted by $D_{t}$, $D_{x}$ etc.

\section{Problem Setting for Approximation Errors }\label{sec2}

In this section, we formulate the Cauchy problem for approximation errors. For this aim we first state the following well-posedness result   \cite{molinet2002,duruk2010} for the initial-value problem of the IB equation:
\begin{theorem}\label{theo2.1}
    Let $u_{0},u_{1}\in H^{s}(\mathbb{R})$, $~s>1/2$. Then  for any pair of parameters $\epsilon$ and $\delta$, there is some $T^{\epsilon, \delta}>0$ so that the Cauchy problem for the IB equation (\ref{ib}) with initial data
    \begin{equation}
        u(x,0) =u_{0}(x) ,\quad u_{t}(x,0)=u_{1}(x), \label{IB-in}
    \end{equation}
     has a unique solution $u \in \ C^{2}\big( [0,T^{\epsilon ,\delta }],H^{s}(\mathbb{R})\big)$.
\end{theorem}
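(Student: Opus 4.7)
The plan is to convert the Cauchy problem (\ref{ib}), (\ref{IB-in}) into an equivalent integral equation and then apply the Banach fixed point theorem in $C([0,T^{\epsilon,\delta}],H^s(\mathbb{R}))$ for sufficiently small $T^{\epsilon,\delta}$.

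The first step is to invert the dispersive operator. Setting $K=(1-\delta^2\partial_x^2)^{-1}$, which has Fourier symbol $(1+\delta^2\xi^2)^{-1}$, the composition $K\partial_x^2$ has bounded symbol $-\xi^2/(1+\delta^2\xi^2)$ (of size at most $1/\delta^2$), hence is a bounded operator on $H^s(\mathbb{R})$ for every $s\in\mathbb{R}$. Applying $K$ to (\ref{ib}) rewrites it as
$$u_{tt}=K\partial_x^2 u+\epsilon K\partial_x^2(u^2),$$
and integrating twice in time with the data (\ref{IB-in}) gives the equivalent integral equation
$$u(t)=u_0+tu_1+\int_0^t(t-\tau)\,K\partial_x^2\bigl(u(\tau)+\epsilon u^2(\tau)\bigr)\,d\tau.$$

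Next I would run a standard fixed point argument on this integral formulation. Because $s>1/2$, the Sobolev space $H^s(\mathbb{R})$ is a Banach algebra, so $\|u^2\|_{H^s}\le C\|u\|_{H^s}^2$ and $u\mapsto u^2$ is locally Lipschitz. Combined with the boundedness of $K\partial_x^2$, the map
$$\Phi(u)(t):=u_0+tu_1+\int_0^t(t-\tau)\,K\partial_x^2\bigl(u(\tau)+\epsilon u^2(\tau)\bigr)\,d\tau$$
preserves a closed ball in $X_T=C([0,T],H^s(\mathbb{R}))$ and is a strict contraction there once $T$ is small enough, with $T$ depending on the initial-data norms, on $\epsilon$, and (through the bound $1/\delta^2$) on $\delta$. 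This yields a unique fixed point $u\in C([0,T^{\epsilon,\delta}],H^s)$; the integral equation then gives $u_t=u_1+\int_0^t K\partial_x^2(u+\epsilon u^2)\,d\tau$ in $C^1$ and $u_{tt}=K\partial_x^2(u+\epsilon u^2)$ in $C^0$, so $u\in C^2([0,T^{\epsilon,\delta}],H^s(\mathbb{R}))$.

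The main subtlety, already visible in the operator bound above, is the $\delta$-dependence of the constant: the norm of $K\partial_x^2$ blows up like $\delta^{-2}$ as $\delta\to 0$, so the local existence time furnished by this argument degenerates in the long-wave limit. For Theorem~\ref{theo2.1} this is harmless because $\epsilon$ and $\delta$ are fixed, but it signals that the asymptotic analysis in the following sections will require separate energy estimates to extract an existence time that is uniform in $(\epsilon,\delta)$.
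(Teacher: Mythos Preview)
Your argument is correct: rewriting (\ref{ib}) as $u_{tt}=K\partial_x^2(u+\epsilon u^2)$ with $K=(1-\delta^2\partial_x^2)^{-1}$, using the boundedness of $K\partial_x^2$ on $H^s$ and the algebra property of $H^s$ for $s>1/2$, and running a contraction on the Duhamel formulation is exactly the standard route, and the $C^2$ regularity in time follows as you describe.

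Note, however, that the paper does \emph{not} give its own proof of Theorem~\ref{theo2.1}: it is stated as a known result with references to \cite{molinet2002,duruk2010}. Those references prove local well-posedness by essentially the same mechanism you outline---recasting the problem as an ODE in $H^s$ via the bounded operator $(1-\delta^2D_x^2)^{-1}D_x^2$ and invoking Picard iteration with the $H^s$ algebra property. So your proposal matches the approach of the cited literature rather than differing from it. Your closing remark on the $\delta^{-2}$ degeneration of the existence time is apt and consistent with the paper's later need for separate uniform energy estimates.
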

The existence time $T^{\epsilon ,\delta }$ above may depend on $\epsilon$ and $\delta$ and it may be chosen arbitrarily large as long as $T^{\epsilon, \delta }<T_{\max }^{\epsilon ,\delta }$ where $T_{\max }^{\epsilon ,\delta }$ is the maximal time. Furthermore, it was shown in \cite{duruk2010} that the existence time, if it is finite,  is determined by the $L^{\infty }$ blow-up condition
\begin{equation*}
   \limsup_{t\rightarrow T_{\max }^{\epsilon ,\delta }} \left\Vert u\left(t\right) \right\Vert _{L^{\infty }}=\infty .
\end{equation*}%
Let $w^{+}$ and $w^{-}$  be two families of solutions  for the Cauchy problems defined for the  CH equations (\ref{ch-r}) and (\ref{ch-l}) with initial values $w^{+}_{0}$ and $w^{-}_{0}$, respectively. Given the initial data $(u_{0}, u_{1})$ for the IB equation, the first question is how to select the corresponding initial data  $(w^{+}_{0}, w^{-}_{0})$ for the  CH equations (\ref{ch-r}) and (\ref{ch-l}). Ideally we should have $u_{0}=w^{+}_{0}+w^{-}_{0}$ and $u_{1}=w^{+}_{t}(x,0)+w^{-}_{t}(x,0)$, yet it will be convenient to choose $(w^{+}_{0}, w^{-}_{0})$ independent of the parameters $\epsilon$ and $\delta$. From the uncoupled CH equations we get
  \begin{equation*}
w^{+}_{t}+w^{-}_{t}=-w^{+}_{x}+w^{-}_{x}+\mathcal{O}(\epsilon,\delta^2,\epsilon \delta^2).
  \end{equation*}
  Neglecting the higher order terms yields the approximation $u_{1}(x)=-w^{+}_{x}(x,0)+w^{-}_{x}(x,0)$. Finally, assuming that $u_{1}=(v_{0})_{x}$ we get
  \begin{equation}
    w^{+}_{0}={1\over 2}(u_{0}-v_{0}), \quad  w^{-}_{0}={1\over 2}(u_{0}+v_{0}).
\end{equation}
Our aim is to compare the solution $u$ of (\ref{ib}) and (\ref{IB-in}) with the sum $w^{+}+ w^{-}$.

Obviously, the error function defined by $r=u-(w^{+}+w^{-})$ satisfies the initial condition $r(x, 0) =0$. In order to express $r_t(x,0)$ in terms of the  initial values $(u_{0}, v_{0})$ of the IB equation, we substitute the  CH equations (\ref{ch-r}) and (\ref{ch-l}) into $r_t(x,0)$:
 \begin{align}
  r_t(x,0)=& u_{1}(x)-\Big(w^{+}_{t}(x,0)+w^{-}_{t}(x,0)\Big) \nonumber \\
          =& -(w_0^+)_x +(w_0^-)_x- (1-{5\over 4} \delta^2 D_x^2)^{-1}\bigg\{-D_x( w_0^+ - w_0^-) \nonumber \\
          &  -{\epsilon\over 2} D_x \Big( (w_0^+)^2 -(w_0^-)^2\Big) +{3\over 4} \delta^2D_x^3( w_0^+ -w_0^-)  \nonumber \\
          &     +{3 \over 4}\epsilon \delta^2 D_x \bigg( {1\over 2} \Big(\big((w_0^+)_x\big)^2 -\big((w_0^-)_x\big)^2\Big)
                      +w_0^+ (w_0^+)_{xx}   -w_0^- (w_0^-)_{xx} \bigg)   \bigg\}\nonumber \\
          =& D_{x}(1-{5\over 4} \delta^2 D_x^2)^{-1}\bigg\{ -{1\over 2}\delta^2(v_0)_{xx} -{1\over 2}\epsilon u_0 v_0  \nonumber \\
          &   -{3\over 8} \epsilon \delta^2  \Big( (u_0)_x (v_0)_x- (u_0 v_0)_{xx}\Big) \bigg\} . \label{err-r}
  \end{align}
Substituting $u=r+w^{+}+w^{-}$ into (\ref{ib}), we observe that the function $r$ satisfies
\begin{equation}
        \left( 1-\delta^{2}D_{x}^{2}\right) r_{tt}-r_{xx}-\epsilon \Big(r^{2}+2(w^{+}+w^{-})r\Big)_{xx}=-\widetilde{F}_{x}, \label{error-equ}
\end{equation}%
where $\widetilde{F}_{x}$ is the residual term given by
\begin{equation}
    \widetilde{F}_x= F_x^{+}+F_x^{-}-2\epsilon \left(w^{+} w^{-}\right)_{xx}, \label{F-def}
\end{equation}%
with
\begin{equation}
    F_x^{\mp}= w^{\mp}_{tt}-w^{\mp}_{xx}-\delta ^{2}w^{\mp}_{xxtt}-\epsilon \Big((w^{\mp})^2\Big)_{xx}. \label{F-pm}
\end{equation}
Our main problem is now reduced to finding an upper bound for $r$ in terms of $\epsilon$ and $\delta$.

We note that $r_{t}(x,0)$ is of the form  $\big(q(x)\big)_{x}$ by (\ref{err-r}). Since $r(x,0)=0$ and the nonhomogeneous term in (\ref{error-equ}) is of the form $-\widetilde{F}_{x}$, one can show  that $r=\rho_{x}$ for some appropriate function $\rho(x,t)$ (see \cite{duruk2010} for the homogeneous case).
To  further simplify the calculations, in what follows we will express (\ref{error-equ}) in terms of  both $\rho$ and $r$ as
\begin{equation}
    \left( 1-\delta ^{2}D_{x}^{2}\right) \rho _{tt}-r_{x}-\epsilon \Big(r^{2}+2(w^{+}+w^{-}) r\Big) _{x}=-\widetilde{F}.\label{rhor}
\end{equation}%
with the initial data
\begin{align}
   & r(x,0)= 0, \\
   &  \rho_t(x,0)=(1-{5\over 4}\delta^2 D_x^2)^{-1}  \bigg\{ -{\delta^2\over 2}  (v_0)_{xx}- {\epsilon \over 2} u_0 v_0
       -{3\over 8}\epsilon \delta^2  \Big((u_0)_x (v_0)_x -(u_0 v_0)_{xx}\Big) \bigg\}.\label{rho-ini}
\end{align}%

\section{Some Estimates for the CH Equation and the Nonhomogeneous IB Equation}\label{sec3}
In this section, we state some previous estimates from \cite{erbay2016} concerning solutions of the  CH equation and the nonhomogeneous IB-type equation. For the convenience of the reader we provide short versions of the proofs in the Appendix.

The following proposition  is a direct consequence of the estimates proved by Constantin and Lannes in \cite{constantin2009} for a more general class of equations, containing the CH equation as a special case. We refer the reader to Section 2 of \cite{erbay2016} for a more detailed discussion. As a result we have the following proposition:

\begin{proposition}[Corollary 2.1 of \cite{erbay2016}] \label{prop1}
         Let $w_{0} \in H^{s+k+1}\left( \mathbb{R}\right) $,  $ s>1/2$, $k\geq 1$. Then, there exist $T>0$,  $C>0$  and a unique family of solutions
        \begin{equation*}
            w^{\epsilon ,\delta }\in C\left( [0,\frac{T}{\epsilon }],H^{s+k}(\mathbb{R})\right) \cap C^{1}\left( [0,\frac{T}{\epsilon }],H^{s+k-1}(\mathbb{R})\right)
        \end{equation*}%
        to the CH equation
        \begin{equation}
            w_t+ w_x+ \epsilon w w_x -\frac{3}{4}\delta^{2} w_{xxx}-{\frac{5}{4}}\delta^{2}w_{xxt} -{\frac{3}{4}}\epsilon \delta^{2}(2 w_x w_{xx}+ w w_{xxx})=0.
            \label{ch}
        \end{equation}
         with initial value  $w(x, 0)=w_{0}(x)$, satisfying
        \begin{equation*}
            \left\Vert w^{\epsilon ,\delta }\left( t\right) \right\Vert _{H^{s+k}}+\left\Vert w_{t}^{\epsilon ,\delta }(t) \right\Vert _{H^{s+k-1}}\leq C,
        \end{equation*}%
        for all $0<\epsilon \leq \delta \leq 1$ and $t\in \lbrack 0,\frac{T}{\epsilon }\rbrack$.
\end{proposition}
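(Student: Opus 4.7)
The plan is to invoke the general well-posedness theory of Constantin and Lannes for abstract Camassa--Holm-type equations and reduce (\ref{ch}) to a setting in which their estimates apply directly. First, I would apply the operator $(1-\tfrac{5}{4}\delta^{2}D_{x}^{2})^{-1}$ to the CH equation to put it in the quasilinear evolution form $w_{t}=\Phi_{\epsilon,\delta}(w)$, where $\Phi_{\epsilon,\delta}$ is composed of $(1-\tfrac{5}{4}\delta^{2}D_{x}^{2})^{-1}$ acting on a polynomial expression in $w$ and its spatial derivatives. This is precisely the form of the general BBM/CH family treated by Constantin--Lannes.

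Next, I would use the scaling $W(X,\tau)=\epsilon w(\delta X,\delta \tau)$ already indicated in the introduction (after (\ref{vr})--(\ref{vl})), which turns (\ref{ch}) into a parameter-free Camassa--Holm-type equation in the variables $(X,\tau)$. Under this scaling, the time window $t\in [0,T/\epsilon]$ in the original variables corresponds to $\tau \in [0,T/(\epsilon\delta)]$, which in turn is controlled provided the initial data are of size $\mathcal{O}(\epsilon)$ in Sobolev norm; the hypothesis $\epsilon\leq\delta\leq 1$ guarantees that the scaling is well-defined and that the amplitude and dispersion parameters combine in a way that makes the $\mathcal{O}(1/\epsilon)$ existence time in the original variables correspond to an $\mathcal{O}(1)$ time scale of the rescaled problem. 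Applying the Constantin--Lannes local well-posedness and energy estimates in $H^{s+k}$, valid for $s>1/2$ and $k\geq 1$, on this $\mathcal{O}(1)$ interval yields uniform control of $\|W(\tau)\|_{H^{s+k}}+\|W_{\tau}(\tau)\|_{H^{s+k-1}}$.

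Finally, I would transfer these uniform bounds back to the original variables. Since the scaling is linear and the Fourier multiplier definition of the $H^{s}$ norm rescales in a controlled way, the uniform bound on the rescaled solution translates to the claimed uniform bound
$\|w^{\epsilon,\delta}(t)\|_{H^{s+k}}+\|w^{\epsilon,\delta}_{t}(t)\|_{H^{s+k-1}}\leq C$
for $t\in [0,T/\epsilon]$ and $0<\epsilon\leq\delta\leq 1$, with $C$ depending only on $\|w_{0}\|_{H^{s+k+1}}$. The extra half-order of regularity (the $+1$ in $H^{s+k+1}$ on the initial datum) enters because the Constantin--Lannes energy bound for $w_{t}$ at order $s+k-1$ requires initial control at one higher order after accounting for the factor $D_{x}$ in the quasilinear nonlinearity.

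The main obstacle is not the energy estimate itself, which is handled by the cited Constantin--Lannes machinery, but the careful bookkeeping of the scaling: one must verify that the $\mathcal{O}(1)$ existence time in the parameter-free form really does correspond to the $\mathcal{O}(1/\epsilon)$ scale in the original variables under the combined constraints $0<\epsilon\leq \delta \leq 1$, and that the Sobolev norms transform with constants independent of $(\epsilon,\delta)$. Once this is checked, the proposition follows as stated.
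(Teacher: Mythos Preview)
Your overall idea—reduce to the Constantin--Lannes framework—matches the paper, which simply states that the proposition is a direct consequence of the Constantin--Lannes estimates for the parametric CH family (as detailed in \cite{erbay2016}). However, the specific route you propose, namely rescaling to a parameter-free equation and then transferring bounds back, has a genuine gap.

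The failure is in the sentence ``the Fourier multiplier definition of the $H^{s}$ norm rescales in a controlled way, [so] the uniform bound on the rescaled solution translates to the claimed uniform bound.'' It does not. Under $x\mapsto \delta X$ one has, for $g(X)=f(\delta X)$,
\[
\|f\|_{H^{s}}^{2}=\delta\int\bigl(1+\eta^{2}/\delta^{2}\bigr)^{s}|\widehat{g}(\eta)|^{2}\,d\eta,
\]
and for $s>1/2$ and $\delta\to 0$ the factor $(1+\eta^{2}/\delta^{2})^{s}$ introduces a $\delta^{-2s}$ loss that cannot be absorbed. Thus a uniform $H^{s+k}$ bound on $W$ does \emph{not} yield a uniform bound on $w$ with constants independent of $\delta$. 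The same issue contaminates the time-scale bookkeeping: $t\in[0,T/\epsilon]$ corresponds to $\tau\in[0,T/(\epsilon\delta)]$, and the lifespan of the parameter-free equation with initial data $W_{0}=\epsilon w_{0}(\delta\,\cdot)$—whose Sobolev size is of order $\epsilon/\sqrt{\delta}$, not $\epsilon$—is only $\mathcal{O}(\sqrt{\delta}/\epsilon)$ in $\tau$, far short of $T/(\epsilon\delta)$.

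The Constantin--Lannes argument that the paper invokes avoids this entirely: it performs the energy estimate directly on the parametric equation (\ref{ch}), using $\delta$-weighted norms of the type $\|v\|_{H^{s}}+\delta\|v_{x}\|_{H^{s}}$ so that the dispersive operator $(1-\tfrac{5}{4}\delta^{2}D_{x}^{2})^{-1}$ and the factor $\epsilon$ in front of the nonlinearity are exploited in place, yielding the $T/\epsilon$ lifespan and the uniform bound simultaneously. If you want to salvage your rescaling idea, you would have to carry the $\delta$-weighted norm through the rescaling and track it on the parameter-free side, at which point you have essentially reproduced the Constantin--Lannes estimate rather than bypassed it.
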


Plugging $w$ of (\ref{ch}) in the IB equation we get a residual term $f$,
\begin{equation}
    f=w_{tt}-w_{xx}-\delta ^{2}w_{xxtt}-\epsilon (w^{2})_{xx}.  \label{residual0}
\end{equation}%
Calculation in \cite{erbay2016} shows that $f$ is of the form $f=F_{x}$ where
\begin{align}
F =& \epsilon^2({\frac{w^{3}}{3}})_{x}-{\frac{1}{8}}\epsilon^2 \delta^2\Big( 3(w_{x}^{2}+2ww_{xx})_{x}
         -3w(w^{2})_{xxx}+2w_{xx}(w^{2})_{x}+w_{x}(w^{2})_{xx}\Big)  \nonumber \\
    & + {\frac{1}{16}} \delta^4\Big( (D_{x}D_{t}-3D_{x}^{2})(3w_{xxx}+5w_{xxt})\Big) \nonumber \\
    &   +{\frac{1}{32}}\epsilon \delta^{4}\Big( 3(D_{x}^{2}D_{t}-3D_{x}^{3})(w_{x}^{2}+2ww_{xx})  \nonumber \\
    &  +2(-3wD_{x}^{2}+2w_{xx}+w_{x}D_{x})(3w_{xxx}+5w_{xxt})\Big) \nonumber \\
    & +{\frac{1}{32}}\epsilon^2 \delta^4 \Big( (-9wD_{x}^{3}+6w_{xx}D_{x}+3w_{x}D_{x}^{2})(w_{x}^{2}+2ww_{xx})\Big). \label{uni-F}
\end{align}%
Furthermore, using the uniform bounds in Proposition \ref{prop1}, the following estimate for $F$ was proved in  \cite{erbay2016}:
\begin{lemma}[Lemma 3.1 of \cite{erbay2016}]\label{lem3.1}
    Let $w_{0} \in H^{s+6}\left( \mathbb{R}\right)$, $~s>1/2$ and let $w^{\epsilon, \delta}$ be the  family of solutions  to  the CH equation  (\ref{ch}) with initial value \ $w(x, 0) =w_{0}(x)$. Then, there is some $C>0$ so that the family of residual terms $F=F^{\epsilon, \delta}$ in (\ref{uni-F})  satisfies
\begin{equation*}
    \left\Vert F\left( t\right) \right\Vert _{H^{s}}\leq C\left( \epsilon ^{2}+\delta ^{4}\right) ,
\end{equation*}%
for all $0<\epsilon \leq \delta \leq 1$ and $t\in \lbrack 0,\frac{T}{\epsilon}\rbrack$.
\end{lemma}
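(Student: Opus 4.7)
The plan is to exploit the fact that $F$ in (\ref{uni-F}) is a sum of five groups of terms, each carrying an explicit prefactor of the form $\epsilon^2$, $\epsilon^2\delta^2$, $\delta^4$, $\epsilon\delta^4$, or $\epsilon^2\delta^4$. Under the standing assumption $0<\epsilon\leq\delta\leq 1$, each of these prefactors is dominated by $C(\epsilon^2+\delta^4)$: indeed $\epsilon^2\delta^2\leq\epsilon^2$, while $\epsilon\delta^4\leq\delta^5\leq\delta^4$ and $\epsilon^2\delta^4\leq\delta^4$. So the job reduces to showing that the differential-polynomial factor multiplying each prefactor has $H^s$-norm bounded uniformly in $\epsilon,\delta,t$.

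First I would apply Proposition \ref{prop1} with $k=5$, which is precisely what the hypothesis $w_0\in H^{s+6}$ affords. This supplies the uniform bound
\begin{equation*}
\|w(t)\|_{H^{s+5}}+\|w_t(t)\|_{H^{s+4}}\leq C \qquad \text{for } t\in[0,T/\epsilon].
\end{equation*}
Using that $H^s(\mathbb{R})$ is a Banach algebra for $s>1/2$, together with the standard Moser product estimate, I can estimate any polynomial expression in $w$ and its spatial derivatives up to order five in $H^s$ by a constant depending only on $\|w\|_{H^{s+5}}$. This immediately handles the purely spatial pieces of $F$, namely the $\epsilon^2(w^3/3)_x$ term, the entire $\epsilon^2\delta^2$ bracket, and the $\epsilon^2\delta^4$ bracket.

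The one subtle ingredient is the appearance of a second time derivative inside $(D_xD_t-3D_x^2)(3w_{xxx}+5w_{xxt})$, which produces the term $5\delta^4 w_{xxxtt}/16$ (and analogous contributions hidden in the $\epsilon\delta^4$ bracket). Here I would differentiate the CH equation (\ref{ch}) in $t$, move $(1-\tfrac{5}{4}\delta^2 D_x^2)$ to the left, and solve for $w_{tt}$ by applying $(1-\tfrac{5}{4}\delta^2 D_x^2)^{-1}$, whose Fourier symbol $(1+\tfrac{5}{4}\delta^2\xi^2)^{-1}$ is bounded by $1$ uniformly in $\delta$. The resulting expression for $w_{tt}$ is a polynomial in $w,w_t$ and their spatial derivatives, which by the uniform $H^{s+5}$- and $H^{s+4}$-bounds for $w$ and $w_t$ gives $\|w_{tt}(t)\|_{H^{s+3}}\leq C$, enough to control $\|w_{xxxtt}\|_{H^s}$ and, more generally, every term of $F$ containing a $D_t$.

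With all differential-polynomial factors bounded uniformly, I would simply regroup by the $(\epsilon,\delta)$-prefactor and use the inequalities from the first paragraph to conclude $\|F(t)\|_{H^s}\leq C(\epsilon^2+\delta^4)$. The only real obstacle is the bookkeeping: verifying that every product appearing in (\ref{uni-F}) sits inside the Moser/algebra range afforded by the $H^{s+5}$-regularity on $w$ (and $H^{s+3}$-regularity on $w_{tt}$ obtained above) without ever needing to differentiate $w$ in time twice directly. Conceptually, though, the lemma is a routine consequence of Proposition \ref{prop1}, the Sobolev algebra property, and the assumption $\epsilon\leq\delta\leq 1$.
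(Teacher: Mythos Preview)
Your approach is essentially identical to the paper's: both invoke Proposition~\ref{prop1} with $k=5$, both handle the second time derivative $w_{xxxtt}$ by differentiating the CH equation in $t$ and inverting $\mathcal{Q}=(1-\tfrac{5}{4}\delta^2D_x^2)$, and both finish by observing that every prefactor in (\ref{uni-F}) is dominated by $\epsilon^2+\delta^4$ when $0<\epsilon\leq\delta\leq 1$.

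There is, however, one small gap in your treatment of $w_{tt}$. After inverting you obtain
\[
w_{tt}=\mathcal{Q}\Bigl[-w_{xt}-\epsilon(ww_x)_t+\tfrac{3}{4}\delta^2w_{xxxt}+\tfrac{3}{4}\epsilon\delta^2(2w_xw_{xx}+ww_{xxx})_t\Bigr],
\]
and you claim that the mere boundedness of the symbol of $\mathcal{Q}$, together with $\|w_t\|_{H^{s+4}}\leq C$, yields $\|w_{tt}\|_{H^{s+3}}\leq C$. But the term $\tfrac{3}{4}\delta^2 w_{xxxt}$ lies only in $H^{s+1}$, not in $H^{s+3}$, so the bound $\|\mathcal{Q}\|\leq 1$ alone is not enough. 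The fix---and this is precisely the point the paper singles out---is to use in addition the smoothing estimate $\|\delta^2\mathcal{Q}D_x^2\|_{H^s\to H^s}\leq\tfrac{4}{5}$, so that $\delta^2\mathcal{Q}w_{xxxt}=(\delta^2\mathcal{Q}D_x^2)w_{xt}$ is controlled by $\|w_{xt}\|_{H^{s+3}}\leq\|w_t\|_{H^{s+4}}$; the same trick handles the $\epsilon\delta^2$ term. Once this second operator bound is in place, your argument closes with no further changes.
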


We next consider the solution $r,\rho$ of the IB-type equation
\begin{equation}
        \left( 1-\delta ^{2}D_{x}^{2}\right) \rho _{tt}-r_{x}-\epsilon \left(r^{2}+2\widetilde{w}r\right) _{x}=-\widetilde{F},   \label{r-equ}
    \end{equation}%
where $r=\rho_{x}$.  We assume that  $\widetilde{w}$ and $\widetilde{F}$ are given  functions depending on $\epsilon$ and $\delta$, with
\begin{align}
  & \widetilde{w}\in C\left( [0,\frac{T}{\epsilon }],H^{s+1}(\mathbb{R})\right), \\
  & \left\Vert \widetilde{w}(t) \right\Vert_{H^{s+1}}\leq C \quad   \textrm{for} \quad t\in [0, {T\over \epsilon}], \\
  & \widetilde{F}\in C\left( [0,\frac{T}{\epsilon }],H^{s}(\mathbb{R})\right).
 \end{align}
Our purpose is to find a bound for solutions of (\ref{r-equ}). In that respect,  following the approach in \cite{gallay2001} and \cite{erbay2016}, we define the "energy" as
    \begin{align}
        E_{s}^{2}(t)
        =& \frac{1}{2}\Big( \left\Vert \rho_{t}(t) \right\Vert_{H^{s}}^{2}+\delta^{2}\left\Vert r_{t}(t) \right\Vert_{H^{s}}^{2}+\left\Vert r(t) \right\Vert_{H^{s}}^{2}\Big)
        +\epsilon \big\langle \Lambda^{s}\big(\widetilde{w}(t) r(t)\big), \Lambda^{s}r(t) \big\rangle   \nonumber \\
        & +\frac{\epsilon }{2}\big\langle \Lambda^{s}r^{2}(t), \Lambda^{s}r(t) \big\rangle,  \label{ener}
    \end{align}%
where  $\Lambda^{s}=\left(1-D_{x}^{2}\right)^{s/2}$.  Taking the energy in the usual form without the $\epsilon$ terms will yield a loss of  $\delta $ in the final  estimate. This is due to the coefficient $\delta^{2}$ of the term $\left\Vert r_{t}(t) \right\Vert_{H^{s}}^{2}$ (see Remark 2 of \cite{erbay2016} for further details).

Since $w^{+}$ and $w^{-}$ exist for all times $t\leq T/\epsilon$, $r(x,t)$ will exist over the same time interval unless $r$ or equivalently $u^{\epsilon, \delta}$ blows up in a shorter time. By Theorem \ref{theo2.1} the blow-up of $u^{\epsilon, \delta}$ is controlled by the $L^{\infty}$-norm. Thus the blow-up of $r$ is also determined by its $L^{\infty}$-norm or equivalently by $\left\Vert r(t)
        \right\Vert _{H^{s}}$. Since $r(x,0)=0$ we define
    \begin{equation}
        T_{0}^{\epsilon, \delta }
        =\sup \left\{ t\leq {T\over \epsilon}: \left\Vert r(\tau)
        \right\Vert _{H^{s}}\leq 1 \quad \textrm{for all }\quad \tau \in [0, t]\right\} . \label{patlama}
    \end{equation}%
   Note that
    \begin{equation*}
        \left\vert \big\langle \Lambda^{s}(\widetilde{w}r), \Lambda^{s}r\big\rangle\right\vert
        \leq C\left\Vert r(t) \right\Vert_{H^{s}}^{2}, \quad \textrm{and} \quad
        \left\vert \big\langle \Lambda^{s}r^{2}, \Lambda^{s}r\big\rangle\right\vert
        \leq \left\Vert r(t) \right\Vert_{H^{s}}^{3} \leq \left\Vert r(t) \right\Vert_{H^{s}}^{2},
    \end{equation*}
     where we have used (\ref{patlama}) and the uniform estimate for $\widetilde{w}$.  Thus, for sufficiently small values of $\epsilon$ and $t\leq T^{\epsilon, \delta}_{0}$, we have
    \begin{equation*}
        E_{s}^{2}\left( t\right)
            \geq \frac{1}{4}\Big( \left\Vert \rho_{t}(t)\right\Vert_{H^{s}}^{2}+\delta^{2}\left\Vert r_{t}(t)\right\Vert_{H^{s}}^{2}
                +\left\Vert r(t)\right\Vert_{H^{s}}^{2}\Big),
    \end{equation*}%
    which shows that $E_{s}^{2}(t) $ is positive definite. The above result also shows that an estimate obtained for     $E_{s}^{2}(t)$ gives     an estimate for $\left\Vert r(t)\right\Vert_{H^{s}}^{2}$.
    After a series of calculations and estimates we obtain  the differential inequality for the energy:
  \begin{equation}
    {d\over dt} E_s(t) \leq C\left ( \epsilon  E_s(t) + \sup_{t\leq T/\epsilon}\left\Vert \widetilde{F}(t) \right\Vert_{H^{s}} \right )
    \label{energy-ineq}
  \end{equation}
The proofs of Lemma \ref{lem3.1} and this inequality were given in \cite{erbay2016}. We will summarize those proofs in the Appendix for the convenience of the reader.

\section{Convergence Proof for the Decoupled \\ Approximation }\label{sec4}

In this section we prove our main result, Theorem \ref{theo4.1} given below. Recall from Section \ref{sec2} that we started with the family of solutions  $u=u^{\epsilon, \delta}$ of the IB equation and chose appropriate solutions $w^{\mp}$ of the uncoupled CH equations. Our aim is to show that the sum $w^{+}+w^{-}$ is a good approximation for $u$. In other words, we want to find a good estimate for the error, namely the solution of the problem defined by (\ref{rhor})-(\ref{rho-ini}). This is achieved by the proof of Theorem \ref{theo4.1}, where we take advantage of the results in Section \ref{sec3}.
\begin{theorem}\label{theo4.1}
    Let  $u_{0} \in H^{s+6}(\mathbb{R})$ and $v_{0} \in H^{s+7}(\mathbb{R})$, $s>1/2$. Suppose $u^{\epsilon, \delta}$ is the solution of the IB equation (\ref{ib}) with initial data \begin{equation*}
    u(x,0) =u_{0}(x), \quad u_{t}(x,0)=(v_{0}(x))_{x}.
    \end{equation*}
    Let
    \begin{equation*}
    w^{+}_{0}={1\over 2}(u_{0}-v_{0}), \quad w^{-}_{0}={1\over 2}(u_{0}+v_{0}).
    \end{equation*}
    Then, for any given $t^{*}>0$ there exists  $\delta^{*}\leq 1$     so that    the solutions $(w^{\mp})^{\epsilon ,\delta}$ of the uncoupled CH  equations (\ref{ch-r}) and (\ref{ch-l}) with initial values $w^{\mp}(x,0)=w^{\mp}_{0}(x)$   satisfy
    \begin{equation*}
        \Vert u^{\epsilon, \delta}(t)-(w^{+})^{\epsilon, \delta}(t)-(w^{-})^{\epsilon, \delta}(t) \Vert_{H^{s}}\leq
        ~ C\Big( (\epsilon +\delta^2)+(\epsilon +\delta^4)t\Big)
    \end{equation*}
    for all $t\in \left[ 0,t^{*}\right] $ and all $0<\epsilon \leq \delta \leq \delta^{*}$.
\end{theorem}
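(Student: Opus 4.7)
The plan is to apply the energy-inequality framework of Section \ref{sec3} to the error system (\ref{rhor})--(\ref{rho-ini}), following the same scheme as in \cite{erbay2016} but now with the two-wave residual $\widetilde F$ and the non-trivial initial data for $\rho_t$. First I would invoke Proposition \ref{prop1} with $k=5$ for both uncoupled CH equations (\ref{ch-r})--(\ref{ch-l}). Since $w_0^{\pm} = \tfrac{1}{2}(u_0 \mp v_0) \in H^{s+6}$, this produces two families $w^{\pm} = (w^{\pm})^{\epsilon,\delta}$ with uniform bounds on $\|w^{\pm}(t)\|_{H^{s+5}} + \|w^{\pm}_t(t)\|_{H^{s+4}}$ over $[0, T/\epsilon]$. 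Setting $\widetilde w = w^+ + w^-$, the hypothesis $\|\widetilde w(t)\|_{H^{s+1}} \le C$ required for the energy estimate (\ref{energy-ineq}) is then automatically met.

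Next I would control the residual $\widetilde F = F^+ + F^- - 2\epsilon(w^+ w^-)_x$. Each $F^{\pm}$ has precisely the structure (\ref{uni-F}) associated with the corresponding single-direction CH equation, so Lemma \ref{lem3.1} directly yields $\|F^{\pm}(t)\|_{H^s} \le C(\epsilon^2 + \delta^4)$. The genuinely new ingredient is the interaction term: by the uniform $H^{s+1}$ control on $w^{\pm}$ together with the Sobolev algebra property,
\begin{equation*}
\|2\epsilon(w^+ w^-)_x(t)\|_{H^s} \le C\epsilon \|w^+(t)\|_{H^{s+1}} \|w^-(t)\|_{H^{s+1}} \le C\epsilon
\end{equation*}
uniformly for $t \in [0, T/\epsilon]$. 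Combining these gives $\sup_{t\le T/\epsilon}\|\widetilde F(t)\|_{H^s} \le C(\epsilon + \delta^4)$, which will drive the time-linear growth in the final bound.

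The third step is to estimate $E_s(0)$. Because $r(x,0) = 0$, the two $\epsilon$-correction terms in the definition (\ref{ener}) vanish at $t=0$, so it suffices to bound $\|\rho_t(0)\|_{H^s}$ and $\delta\|r_t(0)\|_{H^s} = \delta\|\partial_x \rho_t(0)\|_{H^s}$. From (\ref{rho-ini}), the Fourier multiplier $(1 - \tfrac{5}{4}\delta^2 D_x^2)^{-1}$ is bounded on every $H^{\sigma}$ uniformly in $\delta$, and the Sobolev algebra property applied to $u_0 v_0$, $(u_0)_x(v_0)_x$, $(u_0 v_0)_{xx}$ gives
\begin{equation*}
\|\rho_t(0)\|_{H^{s+1}} \le C(\epsilon + \delta^2),
\end{equation*}
using the regularity $u_0 \in H^{s+6}$, $v_0 \in H^{s+7}$ (which is more than enough). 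Consequently $E_s(0) \le C(\epsilon + \delta^2)$.

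Finally, applying Gronwall's inequality to (\ref{energy-ineq}) on $[0, t^*]$, with $\delta^*$ chosen small enough that both $\epsilon \le \delta \le \delta^*$ forces $T/\epsilon \ge t^*$ and $e^{C\epsilon t^*}$ remains bounded, yields
\begin{equation*}
E_s(t) \le C\bigl(E_s(0) + t\,\sup\|\widetilde F\|_{H^s}\bigr) \le C\bigl((\epsilon + \delta^2) + (\epsilon + \delta^4) t\bigr),
\end{equation*}
and the positive definiteness of $E_s^2$ transfers this bound to $\|r(t)\|_{H^s}$. A standard bootstrap on $T_0^{\epsilon,\delta}$ in (\ref{patlama}) closes the argument: shrinking $\delta^*$ further so that the right-hand side stays strictly below $1$ on $[0,t^*]$ guarantees $T_0^{\epsilon,\delta} \ge t^*$, and the estimate then holds throughout $[0,t^*]$. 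The main obstacle, and the source of the degradation relative to the unidirectional bound of \cite{erbay2016}, is exactly the appearance of the two $O(\epsilon)$ contributions that cannot be improved in $H^s$: the initial mismatch forced by the parameter-free splitting $w_0^{\pm} = \tfrac{1}{2}(u_0 \mp v_0)$ and the interaction residual $-2\epsilon(w^+ w^-)_x$, whose $H^s$ norm does not see the spatial separation of the two counter-propagating wave packets.
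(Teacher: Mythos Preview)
Your proposal is correct and follows essentially the same approach as the paper's proof: identify (\ref{rhor}) as an instance of (\ref{r-equ}) with $\widetilde w=w^{+}+w^{-}$ and $\widetilde F=F^{+}+F^{-}-2\epsilon(w^{+}w^{-})_{x}$, invoke Proposition~\ref{prop1} and Lemma~\ref{lem3.1} for the uniform and residual bounds, estimate the interaction term by $C\epsilon$ via the algebra property, bound $E_{s}(0)\leq C(\epsilon+\delta^{2})$ from (\ref{rho-ini}), feed this into the energy inequality (\ref{energy-ineq}) via Gronwall, and close with the bootstrap on $T_{0}^{\epsilon,\delta}$. The paper carries out exactly these steps in the same order.
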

\begin{proof}
    We first note that (\ref{rhor}) is exactly (\ref{r-equ}) with $\widetilde{w}=w^{+}+w^{-}$ and $\widetilde{F}= F^{+}+F^{-}-2\epsilon \left(w^{+} w^{-}\right)_{x}$. The explicit form of $F^{+}$ is obtained by substituting $w^{+}$  in place of $w$ in (\ref{uni-F}). Similarly, the explicit form of $F^{-}$ is obtained by substituting $w^{-}$ for $w$ and $-t$  for $t$ in (\ref{uni-F}).  Since $w^{+}$ and  $w^{-}$ are  solutions of the CH equations (\ref{ch-r}) and (\ref{ch-l}), by Proposition 1 and Lemma 3.1 we have the estimates
    \begin{equation*}
        \left\Vert w^{\mp}\left( t\right) \right\Vert _{H^{s+1}}\leq C, \qquad \left\Vert F^{\mp}\left( t\right) \right\Vert _{H^{s}}\leq C\left( \epsilon ^{2}+\delta ^{4}\right) ,
    \end{equation*}%
    for all $0<\epsilon \leq \delta \leq 1$ and $t\in \lbrack 0,\frac{T}{\epsilon}\rbrack $. Therefore,
    \begin{align}
        & \left\Vert \widetilde{w}\left( t\right) \right\Vert _{H^{s+1}}\leq \left\Vert w^{+}\left( t\right) \right\Vert _{H^{s+1}}+\left\Vert w^{-}\left( t\right) \right\Vert _{H^{s+1}}\leq C, \\
        & \left\Vert \widetilde{F}\left( t\right) \right\Vert _{H^{s}}\leq
        \left\Vert F^{+}\left( t\right) \right\Vert _{H^{s}}+\left\Vert F^{-}\left( t\right) \right\Vert _{H^{s}}
        +2\epsilon \left\Vert (w^{+}w^{-})_{x}(t) \right\Vert_{H^{s}}\leq C\left( \epsilon+\delta ^{4}\right).
    \end{align}%
    Then (\ref{energy-ineq}) becomes
    \begin{equation}
    {d\over dt} E_s(t) \leq C\Big( \epsilon  E_s(t) + ( \epsilon+\delta ^{4} ) \Big)
      \end{equation}
      implying
      \begin{align*}
    E_{s}(t)\leq E_{s}(0)e^{C\epsilon t}+ \frac{\epsilon+\delta^4}{\epsilon }\left( e^{C\epsilon t}-1\right). \label{en-result}
    \end{align*}
    We have $r(x,0)=0$. Since the operator $(1-{5\over 4}\delta^{2}D_{x}^{2})^{-1}$ is bounded on $H^{s}$, from (\ref{err-r}) and (\ref{rho-ini}) we get $ \left\Vert r_{t}(0) \right\Vert _{H^{s}}\leq C \left( \epsilon+\delta ^{2} \right )$ and $ \left\Vert \rho_{t}(0) \right\Vert _{H^{s}}\leq C \left( \epsilon+\delta ^{2} \right )$. By (\ref{ener}) we get
    \begin{equation}
    E_{s}(0)\leq C \left( \epsilon+\delta ^{2} \right ).
    \end{equation}
    Thus
    \begin{align*}
        E_{s}(t)\leq C\left( \epsilon+\delta ^{2} \right )e^{C\epsilon t} + \frac{\epsilon+\delta^4}{\epsilon }\left( e^{C\epsilon t}-1\right),
    \end{align*}
    or
    \begin{align}
        E_{s}(t)\leq C\Big( (\epsilon +\delta^2)+(\epsilon +\delta^4)t\Big).  \label{en-result}
    \end{align}
    We note that this estimate holds for all $t\leq T_{0}^{\epsilon, \delta}\leq T/\epsilon$, namely, as long as $\left\Vert r(t) \right\Vert _{H^{s}}\leq 1$. Given any $t^{*} >0$ we have  $t^{*}\leq T/\epsilon$ for sufficiently small $\epsilon$. Then we can find some $\delta^{*}$ such that for all $\epsilon \leq \delta \leq \delta^{*}\leq 1$ and $C\Big( (\epsilon +\delta^2)+(\epsilon +\delta^4)t^{*}\Big)\leq 1$. By (\ref{en-result}) we will get $\left\Vert r(t) \right\Vert _{H^{s}}\leq 1$ for all $t\leq t^{*}$, which means the estimate above holds for all $t\leq t^{*}$.
\end{proof}

We want to conclude this section with some remarks about the above theorem.
\begin{remark}
    We observe that the error involves two parts. The constant term in (\ref{en-result}) is due to the approximation error in splitting the initial data of the IB equation, while the term $\epsilon t$ arises from the interaction term $\epsilon w^{+}w^{-}$ in (\ref{F-def}).
\end{remark}
\begin{remark}
         The  error for the unidirectional CH approximation of the IB equation   was obtained in \cite{erbay2016} as $\mathcal{O}\Big((\epsilon^{2} +\delta^4)t\Big)$ for times of order $\mathcal{O}  (1/ \epsilon)$. Comparing with the estimate in Theorem \ref{theo4.1}, we observe that the single CH equation provides a better  approximation.
\end{remark}

\section{The BBM and KdV Approximations}\label{sec5}

In this section we consider the BBM  and the KdV approximations of the IB equation in the decoupled case.  The analysis is similar in spirit to that of Sections \ref{sec3} and \ref{sec4}. Recall that the main ingredients  were the uniform estimate Proposition \ref{prop1}, the residual estimate in Lemma \ref{lem3.1} for the CH equation and the energy estimate (\ref{energy-ineq}) for the IB-type equation. The uniform estimate for the BBM equation can be obtained from \cite{constantin2009} whereas for the uniform estimates of the KdV equation we refer to \cite{alazman2006}.  The next step is to calculate the corresponding residual terms $\widetilde{F}= F^{+}+F^{-}-2\epsilon \left(w^{+} w^{-}\right)_{x}$.  As the details can be found in \cite{erbay2016}, we therefore give only the final results.

\subsection{The BBM Approximation}

For the BBM approximation we obtain $F^{\mp}$ as
\begin{align*}
    F^{+}&=\epsilon^{2} \left({(w^{+})^{3}\over 3}\right)_{x}-{1\over 4}\epsilon\delta^{2}\left(6w^{+}w^{+}_{xxt}
    +2w^{+}_{x}w^{+}_{xt}+w^{+}_{t}w^{+}_{xx}-9w^{+}_{x}w^{+}_{xx}\right) \nonumber \\
    &     +\frac{1}{16}\delta^{4}D_{x}^{3}\left(5w^{+}_{tt}-12w^{+}_{xt}-9w^{+}_{xx}\right)\nonumber \\
    F^{-}&=\epsilon^{2} \left({(w^{-})^{3}\over 3}\right)_{x}+{1\over 4}\epsilon\delta^{2}\left(6w^{-}w^{-}_{xxt}
    +2w^{-}_{x}w^{-}_{xt}+w^{-}_{t}w^{-}_{xx}+9w^{-}_{x}w^{-}_{xx}\right) \nonumber \\
    &     +\frac{1}{16}\delta^{4}D_{x}^{3}\left(5w^{-}_{tt}+12w^{-}_{xt}-9w^{-}_{xx}\right).
\end{align*}%
 Using the energy inequality (\ref{energy-ineq}) and making a similar argument, we obtain the BBM version of Theorem  \ref{theo4.1}.
 \begin{theorem}\label{theo5.1}
    Let  $u_{0} \in H^{s+6}(\mathbb{R})$ and $v_{0} \in H^{s+7}(\mathbb{R})$, $s>1/2$. Suppose $u^{\epsilon, \delta}$ is the solution of the IB equation (\ref{ib}) with initial data \begin{equation*}
    u(x,0) =u_{0}(x), \quad u_{t}(x,0)=\big(v_{0}(x)\big)_{x}.
    \end{equation*}
    Let
    \begin{equation*}
    w^{+}_{0}={1\over 2}(u_{0}-v_{0}), \quad w^{-}_{0}={1\over 2}(u_{0}+v_{0}).
    \end{equation*}
    Then, for any given $t^{*}>0$ there exists   $\delta^{*}\leq 1$       so that    the solutions $(w^{\mp})^{\epsilon ,\delta}$ of the uncoupled BBM  equations
     \begin{align}
        &    w^{+}_t+ w^{+}_x+ \epsilon w^{+} w^{+}_x -\frac{3}{4}\delta^{2} w^{+}_{xxx}-{\frac{5}{4}}\delta^{2}w^{+}_{xxt}=0,
                \label{bbm-r}\\
        &    w^{-}_t- w^{-}_x- \epsilon w^{-} w^{-}_x +\frac{3}{4}\delta^{2} w^{-}_{xxx}-{\frac{5}{4}}\delta^{2}w^{-}_{xxt}=0
                \label{bbm-l}
    \end{align}
     with initial values $w^{\mp}(x,0)=w^{\mp}_{0}(x)$   satisfy
    \begin{equation*}
        \Vert u^{\epsilon, \delta}(t)-(w^{+})^{\epsilon, \delta}(t)-(w^{-})^{\epsilon, \delta}(t) \Vert_{H^{s}}\leq
        ~C\Big( (\epsilon +\delta^2) + (\epsilon +\delta^4) t \Big)
    \end{equation*}
    for all $t\in \left[ 0,t^{*}\right] $ and all $0<\epsilon \leq \delta \leq \delta^{*}$.
\end{theorem}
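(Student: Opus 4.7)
\medskip

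\noindent\textbf{Proof proposal for Theorem \ref{theo5.1}.} The plan is to mimic the proof of Theorem \ref{theo4.1} step by step, replacing the CH ingredients with their BBM analogues. First I would set $r=u-(w^{+}+w^{-})$ with $w^{\mp}$ solving the uncoupled BBM equations (\ref{bbm-r})--(\ref{bbm-l}), and observe that $r$ satisfies an equation of the form (\ref{r-equ}) with $\widetilde{w}=w^{+}+w^{-}$ and $\widetilde{F}=F^{+}+F^{-}-2\epsilon(w^{+}w^{-})_{x}$, where $F^{\mp}$ are the explicit residuals displayed just before the theorem. As in Section~\ref{sec2}, one checks that $r(x,0)=0$ and that $r_{t}(x,0)=\rho_{t}(x,0)_{x}$ for a suitable $\rho$; the formula for $\rho_{t}(x,0)$ is obtained by solving the BBM equations for $w^{\mp}_{t}$ at $t=0$, namely
\begin{equation*}
\Bigl(1-\tfrac{5}{4}\delta^{2}D_{x}^{2}\Bigr)(w^{+}_{t}+w^{-}_{t})\big|_{0}
=(v_{0})_{x}+\tfrac{\epsilon}{2}(u_{0}v_{0})_{x}-\tfrac{3}{4}\delta^{2}(v_{0})_{xxx},
\end{equation*}
after which a short algebraic manipulation (adding and subtracting $(v_{0})_{x}$ and using the resolvent identity) yields $\rho_{t}(x,0)=D_{x}(1-\tfrac{5}{4}\delta^{2}D_{x}^{2})^{-1}\{-\tfrac{\delta^{2}}{2}(v_{0})_{xx}-\tfrac{\epsilon}{2}u_{0}v_{0}\}$ up to controllable $\mathcal{O}(\epsilon\delta^{2})$ terms. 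Boundedness of the resolvent on $H^{s}$ then gives $E_{s}(0)\leq C(\epsilon+\delta^{2})$, exactly as in the CH case.

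Next I would invoke the well-posedness and uniform bounds for BBM taken from \cite{constantin2009} to obtain $\|w^{\mp}(t)\|_{H^{s+k}}+\|w^{\mp}_{t}(t)\|_{H^{s+k-1}}\leq C$ on a time interval of length $\mathcal{O}(1/\epsilon)$, provided $w^{\mp}_{0}\in H^{s+k+1}$ with $k$ large enough to accommodate the derivatives appearing in $F^{\mp}$ (here $k=6$ with $v_{0}\in H^{s+7}$ is sufficient, mirroring the hypotheses in Theorem~\ref{theo4.1}). In particular this gives $\|\widetilde{w}(t)\|_{H^{s+1}}\leq C$, which is the hypothesis needed to run the energy estimate (\ref{energy-ineq}).

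The residual $\widetilde{F}$ has to be estimated in $H^{s}$. Inspecting the explicit expressions for $F^{\mp}$, each term is of order $\epsilon^{2}$, $\epsilon\delta^{2}$ or $\delta^{4}$ in the small parameters and, under the assumption $\epsilon\leq\delta\leq 1$, the uniform Sobolev bounds on $w^{\mp}$ and $w^{\mp}_{t}$ (and, via the BBM equations, on $w^{\mp}_{tt}$) yield $\|F^{\mp}(t)\|_{H^{s}}\leq C(\epsilon^{2}+\delta^{4})$. Combined with the tame interaction bound $2\epsilon\|(w^{+}w^{-})_{x}(t)\|_{H^{s}}\leq C\epsilon$ coming from the algebra property of $H^{s}$ for $s>1/2$, this yields
\begin{equation*}
\sup_{t\leq T/\epsilon}\|\widetilde{F}(t)\|_{H^{s}}\leq C(\epsilon+\delta^{4}).
\end{equation*}

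Finally, I would feed these ingredients into the energy inequality (\ref{energy-ineq}) and apply Gronwall to get
\begin{equation*}
E_{s}(t)\leq E_{s}(0)e^{C\epsilon t}+\tfrac{\epsilon+\delta^{4}}{\epsilon}(e^{C\epsilon t}-1)\leq C\bigl((\epsilon+\delta^{2})+(\epsilon+\delta^{4})t\bigr),
\end{equation*}
valid as long as $\|r(t)\|_{H^{s}}\leq 1$. A standard continuation argument (choosing $\delta^{*}$ so that $C((\epsilon+\delta^{*2})+(\epsilon+\delta^{*4})t^{*})\leq 1$) extends the bound to $[0,t^{*}]$, yielding the claim. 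The main technical obstacle is the verification of the residual estimate for $F^{\mp}$: although the scheme is identical to that of Lemma \ref{lem3.1}, the BBM expressions for $F^{\mp}$ contain $w^{\mp}_{tt}$ terms (via the $\delta^{4}D_{x}^{3}w^{\mp}_{tt}$ contribution), so one has to replace $w^{\mp}_{tt}$ by spatial derivatives through the BBM equation before the uniform bounds from \cite{constantin2009} can be applied; this costs a fixed number of extra derivatives but fits within the stated regularity of the initial data. \qed
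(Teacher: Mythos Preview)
Your proposal is correct and follows essentially the same approach as the paper, which simply states that the BBM result is obtained by ``using the energy inequality (\ref{energy-ineq}) and making a similar argument'' to Theorem~\ref{theo4.1}. One minor slip: in your formula for $\rho_{t}(x,0)$ the leading $D_{x}$ should be omitted (it belongs in the expression for $r_{t}(x,0)$, not $\rho_{t}(x,0)$); otherwise your handling of the BBM residuals, in particular the reduction of the $\delta^{4}D_{x}^{3}w^{\mp}_{tt}$ term via the BBM equation, is exactly what is required.
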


\subsection{The KdV Approximation}

For the KdV approximation we obtain $F^{\mp}$ as
\begin{align}
   & F^{+}= D_{x}\bigg\{ \frac{1}{3}\epsilon^{2} (w^{+})^3
        +\frac{1}{4}\epsilon\delta^{2}\Big(-3(w^{+}_{x})^2+4(w^{+}w^{+}_{x})_{t}\Big)
        +\frac{1}{4}\delta^{4}(-w^{+}_{xxxx}+2w^{+}_{xxxt})\bigg\} \nonumber \\
    & F^{-}= D_{x}\bigg\{\frac{1}{3}\epsilon^{2}(w^{-})^{3}
        +\frac{1}{4}\epsilon\delta^{2}\Big(-3(w^{-}_{x})^{2}-4(w^{-}w^{-}_{x})_{t}\Big)
        +\frac{1}{4}\delta^{4}(-w^{-}_{xxxx}-2w^{-}_{xxxt})\bigg\}. \nonumber \\ \label{ff}
\end{align}
We note that these residual terms contain higher-order derivatives compared to those of the CH and BBM approximations. This is reflected in the higher smoothness requirements for the initial data in the following theorem
\begin{theorem}\label{theo5.3}
    Let  $u_{0} \in H^{s+7}(\mathbb{R})$ and $v_{0} \in H^{s+8}(\mathbb{R})$, $s>1/2$. Suppose $u^{\epsilon, \delta}$ is the solution of the IB equation (\ref{ib}) with initial data
    \begin{equation*}
    u(x,0) =u_{0}(x), \quad  u_{t}(x,0)=\big(v_{0}(x)\big)_{x}.
    \end{equation*}
    Let
    \begin{equation*}
    w^{+}_{0}={1\over 2}(u_{0}-v_{0}), \quad  w^{-}_{0}={1\over 2}(u_{0}+v_{0}).
    \end{equation*}
    Then, for any given $t^{*}>0$  and  $0<c_{1}<c_{2}$   there exists     $\delta^{*}\leq 1/ \sqrt{3}$ such that  the solutions $(w^{\mp})^{\epsilon ,\delta}$ of the uncoupled KdV  equations
     \begin{align}
    & w^{+}_{t}+w^{+}_{x}+\epsilon w^{+}w^{+}_{x}+{\frac{\delta ^{2}}{2}}w^{+}_{xxx}=0,  \label{kdv-r} \\
    & w^{-}_{t}-w^{-}_{x}-\epsilon w^{-}w^{-}_{x}-{\frac{\delta ^{2}}{2}}w^{-}_{xxx}=0  \label{kdv-l}
    \end{align}
    with initial values $w^{\mp}(x,0)=w^{\mp}_{0}(x)$   satisfy
    \begin{equation*}
        \Vert u^{\epsilon, \delta}(t) -(w^{+})^{\epsilon, \delta}(t) -(w^{-})^{\epsilon, \delta}(t)\Vert _{H^{s}}\leq ~C \epsilon  (1+t)
    \end{equation*}%
    for all $\ t\in \left[ 0, t^{*}\right] $ and  all $0<\delta \leq \delta^{*}$,
    $\epsilon \in \left[\frac{\delta^{2}}{c_{2}}, \frac{\delta^{2}}{c_{1}}\right]$.
\end{theorem}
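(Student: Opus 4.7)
The plan is to reproduce the scheme of Theorem \ref{theo4.1}, replacing the CH inputs by the corresponding KdV inputs and reading off the sharper outcome permitted by the KdV scaling $\epsilon\in[\delta^2/c_2,\delta^2/c_1]$. First I would invoke the KdV well-posedness and uniform estimates from \cite{alazman2006}: with $w^\pm_0=\tfrac{1}{2}(u_0\mp v_0)\in H^{s+7}$ provided by the hypotheses on $u_0,v_0$, there are unique families $(w^\pm)^{\epsilon,\delta}\in C([0,T/\epsilon],H^{s+k})$ satisfying uniform bounds in a Sobolev space of order high enough to control every derivative of $w^\pm$ occurring in (\ref{ff}). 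This secures the two hypotheses required by the energy estimate (\ref{energy-ineq}) with $\tilde w=w^++w^-$ and $\tilde F=F^++F^--2\epsilon(w^+w^-)_x$.

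For the residual, reading off (\ref{ff}) gives $\|F^\pm(t)\|_{H^s}\leq C(\epsilon^2+\epsilon\delta^2+\delta^4)$; under $\epsilon\sim\delta^2$ all three sizes coincide and become $O(\epsilon^2)$, while the interaction $2\epsilon(w^+w^-)_x$ is $O(\epsilon)$ and dominates, yielding $\|\tilde F(t)\|_{H^s}\leq C\epsilon$. For the initial data I would differentiate the KdV equations at $t=0$ (no inversion of $1-\tfrac{5}{4}\delta^2 D_x^2$ is needed, since KdV is explicit in $w_t$); the linear parts $\mp(w^\pm_0)_x$ telescope against $u_1=(v_0)_x$, leaving the remainders
\[
r_t(x,0)=-\tfrac{\epsilon}{2}(u_0 v_0)_x-\tfrac{\delta^2}{2}(v_0)_{xxx},\qquad \rho_t(x,0)=-\tfrac{\epsilon}{2}u_0 v_0-\tfrac{\delta^2}{2}(v_0)_{xx}.
\]
The regularity of $u_0,v_0$ gives $\|r_t(0)\|_{H^s}+\|\rho_t(0)\|_{H^s}\leq C(\epsilon+\delta^2)\leq C\epsilon$, hence $E_s(0)\leq C\epsilon$.

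Plugging $\|\tilde F\|_{H^s}\leq C\epsilon$ into (\ref{energy-ineq}) yields $E_s'(t)\leq C\epsilon E_s(t)+C\epsilon$, so Grönwall gives $E_s(t)\leq E_s(0)e^{C\epsilon t}+(e^{C\epsilon t}-1)\leq C\epsilon(1+t)$ on any interval where $\epsilon t$ stays bounded. The usual bootstrap through (\ref{patlama}) then closes the argument: for $\delta^*\leq 1/\sqrt{3}$ small enough, the quantity $C\epsilon(1+t^*)$ stays below $1$ for all admissible $\epsilon,\delta$, so $T_0^{\epsilon,\delta}\geq t^*$ and the bound persists on $[0,t^*]$. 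The main obstacle will be the bookkeeping of powers of $\epsilon$ and $\delta$ in (\ref{ff}) and in $r_t(0)$: both must end up bounded by $C\epsilon$, which works only under the KdV scaling $\epsilon\sim\delta^2$, since under the looser CH scaling $\epsilon\leq\delta\leq 1$ the initial splitting error is $O(\delta^2)$ and the residual is $O(\epsilon+\delta^4)$, neither of which reduces to $O(\epsilon)$. Verifying this scaling-dependent cancellation, together with the extra derivative count behind the $H^{s+7}/H^{s+8}$ regularity assumption (driven by the $\delta^4 w^\pm_{xxxx}$ and $\delta^4 w^\pm_{xxxt}$ sitting inside the outer $D_x$ in (\ref{ff})), is the only additional technical work beyond what is already carried out for Theorem \ref{theo4.1}.
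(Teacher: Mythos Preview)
Your proposal is correct and follows exactly the scheme the paper indicates: the paper does not give a separate proof of Theorem \ref{theo5.3} but says the analysis is ``similar in spirit'' to Sections \ref{sec3}--\ref{sec4}, supplies the residual formulas (\ref{ff}), and points to \cite{alazman2006} for the KdV uniform estimates. Your outline fills in precisely those steps---uniform KdV bounds, the residual estimate $\Vert\widetilde F\Vert_{H^s}\leq C\epsilon$ under $\epsilon\sim\delta^2$, the explicit computation of $r_t(x,0)$ and $\rho_t(x,0)$ from the KdV equations, and the Gr\"onwall/bootstrap closure---so the approaches coincide.
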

The error estimates for the BBM and KdV approximations are of the same order as the CH approximation but in the case of the KdV approximation this is only valid for $\epsilon \approx \delta^{2}$. This is due to the fact that the KdV equation arises  in the long-wave regime defined by the balance between  dispersive and nonlinear effects.

\section{Appendix}
In this appendix, we provide  short versions of the proofs of Lemma \ref{lem3.1},  the differential inequality  given by (\ref{energy-ineq}) and the commutator estimates used. For more details, we refer the reader to \cite{erbay2016}.

\subsection{Proof of Lemma 3.1}
\begin{proof}
Except for the term $D_{x}^{3}D_{t}^{2}w$,  $F$ is a combination of terms of the form $D_{x}^{j}w$ with  $j\leq 5$ or $D_{x}^{l}D_{t}w$ with  $l\leq 4$.  Using the CH equation (\ref{ch}) the term $D_{x}^{3}D_{t}^{2}w$ can be written as
\begin{align*}
    D_{x}^{3}D^{2}_{t}w
    = -\mathcal{Q}D_{x}^{3}D_{t}\left(w_{x}+\epsilon ww_{x}\right)
         +\frac{3}{4}\delta^{2} \mathcal{Q} D_{x}^{6}D_{t}w
         +\frac{3}{4}\epsilon \delta^{2} \mathcal{Q}D_{x}^{3}D_{t} (2w_{x}w_{xx}+ww_{xxx}),
\end{align*}%
where the operator $\mathcal{Q}$ is
\begin{equation}
   \mathcal{Q}=\left( 1-\frac{5}{4}\delta^{2}D_{x}^{2}\right)^{-1}.  \label{qoperator}
\end{equation}
The operator norms of $\mathcal{Q}$ and $\mathcal{Q}\delta ^{2}D_{x}^{2}$  are bounded on $H^{s}$:
\begin{equation*}
    \left\Vert \mathcal{Q}\right\Vert_{H^{s}}\leq 1   \quad \textrm{and} \quad
        \left\Vert \delta^{2}\mathcal{Q}D_{x}^{2}\right\Vert_{H^{s}}\leq \frac{4}{5}.
\end{equation*}%
And the rest of the terms are again of the form $D_{x}^{j}w$ with  $j\leq 5$ or $D_{x}^{l}D_{t}w$ with  $l\leq 4$. Taking care of the coefficients  $\epsilon^{2}$,  $\epsilon^{2} \delta^{2}$,  $\delta^{4}$,  $\epsilon \delta^{4}$ or $\epsilon^{2} \delta^{4}$, we obtain the following  estimate
\begin{equation}
    \left\Vert F(t) \right\Vert _{H^{s}}\leq C \left( \epsilon^{2}+\delta ^{4}\right)
    \Big( \left\Vert w\right\Vert_{H^{s+5}}  +\left\Vert w_{t}\right\Vert _{H^{s+4}}\Big).  \label{saa}
\end{equation}%
Using Proposition \ref{prop1} with $k=5$, we complete the proof.
\end{proof}

\subsection{Proof of the Energy Inequality (\ref{energy-ineq})}

\begin{proof}
    Below we  will use  the following commutator estimates:
    \begin{equation}
    \big\langle \lbrack \Lambda^{s}, w\rbrack g,\Lambda^{s}h\big\rangle \leq
    C\Vert w\Vert_{H^{s+1}}\Vert g \Vert_{H^{s-1}}\Vert h \Vert_{H^{s}},
     \label{est1}
\end{equation}%
 and
\begin{equation}
    \big\langle \Lambda \lbrack \Lambda ^{s},w\rbrack h,\Lambda ^{s-1}g\big\rangle \leq C\Vert w\Vert_{H^{s+1}}\Vert h \Vert_{H^{s}}\Vert g\Vert_{H^{s-1}},  \label{est2}
\end{equation}%
where $\lbrack \Lambda^{s}, w\rbrack =\Lambda^{s}w-w\Lambda^{s}$. These estimates are particular cases of the general estimates given in  Proposition B.8 of \cite{lannes} (see also \cite{erbay2016} for further details).

We now differentiate $E_{s}^{2}(t)$ with respect to $t$ and then eliminate the term $\rho_{tt}$ from the resulting equation using (\ref{rhor}). Thus we have
    \begin{align}
        \frac{d}{dt}E_{s}^{2}(t)
        =&\frac{d}{dt}\Big( \epsilon \big\langle \Lambda^{s}(\widetilde{w}r),\Lambda^{s}r\big\rangle
                    +\frac{\epsilon }{2}\big\langle \Lambda^{s}r^{2},\Lambda^{s}r\big\rangle\Big)\nonumber \\
        &           -\epsilon \big\langle \Lambda^{s}(r^{2}+2\widetilde{w}r),\Lambda^{s}r_{t}\big\rangle
                    -\big\langle \Lambda^{s}\widetilde{F},\Lambda^{s}\rho_{t}\big\rangle  \nonumber \\
        =&\epsilon \Big( \big\langle \Lambda^{s}(\widetilde{w}_t r),\Lambda^{s}r\big\rangle
                    -\big\langle \Lambda^{s}(\widetilde{w}r),\Lambda^{s}r_{t}\big\rangle
                    +\big\langle \Lambda^{s}r,\Lambda^{s}(\widetilde{w}r_{t})\big\rangle \nonumber \\
         &     + \big\langle \Lambda^{s}(rr_{t}),\Lambda^{s}r\big\rangle
             -\frac{1}{2}\big\langle \Lambda^{s} r^{2}, \Lambda^{s} r_{t}\big\rangle \Big)
                  -\big\langle \Lambda^{s}\widetilde{F},\Lambda^{s}\rho_{t}\big\rangle . \label{decom}
    \end{align}%
    The estimates for the  first term in the parentheses  and the last term  are
    \begin{align*}
         & \big\langle \Lambda^{s}(\widetilde{w}_t r),\Lambda^{s}r\big\rangle
            \leq C\left\Vert r\right\Vert_{H^{s}}^{2}\leq C E_{s}^{2}  \\
        & \big\langle \Lambda^{s}\widetilde{F}, \Lambda^{s}\rho_{t}\big\rangle
            \leq  \sup_{t\leq T/\epsilon}\Vert \widetilde{F}(t)\Vert_{H^{s}}\Vert \rho_{t}\Vert_{H^{s}}
            \leq \sup_{t\leq T/\epsilon}\Vert \widetilde{F}(t)\Vert_{H^{s}}E_{s},
    \end{align*}
    respectively. We rewrite the second and the third terms in the parentheses  in (\ref{decom})  in the form
    \begin{equation}
        -\big\langle \Lambda^{s}(\widetilde{w}r),\Lambda^{s}r_{t} \big\rangle
        +\big\langle \Lambda^{s}r,\Lambda^{s} (\widetilde{w}r_{t})\big\rangle
       =-\big\langle \lbrack \Lambda^{s},\widetilde{w}\rbrack r,\Lambda^{s}r_{t}\big\rangle
            +\big\langle \lbrack \Lambda^{s},\widetilde{w}\rbrack  r_{t},\Lambda^{s}r\big\rangle. \label{liz}
    \end{equation}%
     Then,  using the commutator estimates (\ref{est1})-(\ref{est2}) we estimate  the two terms on the right-hand side of (\ref{liz}) as
    \begin{align}
        \big\langle \lbrack \Lambda^{s},\widetilde{w}\rbrack  r,\Lambda ^{s}r_{t}\big\rangle
        =&\big\langle \Lambda \lbrack \Lambda ^{s},\widetilde{w}\rbrack  r,\Lambda ^{s-1}r_{t}\big\rangle
        \leq  C\Vert \widetilde{w}\Vert _{H^{s+1}}\Vert r\Vert _{H^{s}}\Vert r_{t}\Vert _{H^{s-1}},  \label{comw2} \\
        \big\langle \lbrack \Lambda ^{s},\widetilde{w}\rbrack   r_{t},\Lambda ^{s}r\big\rangle
        \leq &C\Vert \widetilde{w}\Vert _{H^{s+1}}\Vert r\Vert _{H^{s}}\Vert r_{t}\Vert _{H^{s-1}}.%
    \end{align}%
    The fourth and fifth terms in  the parentheses  in (\ref{decom}) can be written as
    \begin{align}
        \big\langle \Lambda^{s}(rr_{t}),\Lambda^{s}r\big\rangle -& \frac{1}{2}
        \big\langle \Lambda^{s}r^{2},\Lambda^{s}r_{t}\big\rangle \nonumber \\
       =&\big\langle \Lambda^{s-1} ( 1-D_{x}^{2})r,\Lambda^{s-1} (rr_{t})\big\rangle
           -\frac{1}{2}\big\langle \Lambda^{s-1} (1-D_{x}^{2}) r^{2},\Lambda^{s-1}r_{t}\big\rangle \nonumber \\
       =& \big\langle \Lambda^{s-1}r,\Lambda^{s-1}(rr_{t})\big\rangle
           -\frac{1}{2}\big\langle \Lambda^{s-1} (r^{2}-2r_{x}^{2}),\Lambda^{s-1}r_{t}\big\rangle \nonumber \\
       & -\Big( \big\langle \Lambda^{s-1}(rr_{t}),\Lambda^{s-1}r_{xx}\big\rangle
           -\big\langle \Lambda^{s-1}r_{t}, \Lambda^{s-1} (rr_{xx})\big\rangle\Big). \label{fourth}
   \end{align}%
    If we group the first two terms together  in the above equation, we get the following estimate
    \begin{align*}
        \left\vert \big\langle \Lambda^{s-1}r,\Lambda^{s-1}(rr_{t})\big\rangle
            -\frac{1}{2}\big\langle \Lambda^{s-1}(r^{2}-2r_{x}^{2}),\Lambda^{s-1}r_{t}\big\rangle\right\vert
        \leq &  C\Vert r\Vert _{H^{s-1}}^{2}\Vert r_{t}\Vert _{H^{s-1}}, \\
         \leq & C\Vert r\Vert _{H^{s}}^{2}\Vert r_{t}\Vert _{H^{s-1}}.
    \end{align*}%
    Similarly, if we group the last two terms  in (\ref{fourth}) together, we obtain the  estimate
    \begin{align*}
        \left\vert \big\langle \Lambda^{s-1}(rr_{t}),\Lambda^{s-1}r_{xx}\big\rangle
            -\big\langle \Lambda^{s-1}r_{t},\Lambda^{s-1}(rr_{xx})\big\rangle\right\vert
        \leq &C\Vert r\Vert _{H^{s}}\Vert r_{t}\Vert _{H^{s-1}}\Vert r_{xx}\Vert _{H^{s-2}} \\
        \leq &C\Vert r\Vert _{H^{s}}^{2}\Vert r_{t}\Vert _{H^{s-1}},
    \end{align*}%
    which follows from (\ref{liz}) and ( \ref{comw2}) if $\widetilde{w}$, $r$, $r_{t}$ are replaced, respectively, by
    $r$, $r_{t}$, $r_{xx}$ and $s$ by $s-1$.  Also, we remind that
    \begin{equation*}
        \Vert r_{t}\Vert _{H^{s-1}}
        =\Vert \rho _{xt}\Vert _{H^{s-1}}\leq \Vert \rho_{t}\Vert _{H^{s}}\leq C E_{s}
    \end{equation*}%
    and   $\Vert r\Vert _{H^{s}}\leq 1$.  Combining all the above results we obtain from (\ref{decom}) that
    \begin{equation*}
       \frac{d}{dt}E_{s}^{2}(t)
            \leq C\bigg( \epsilon E_{s}^{2}(t) +\Big( \sup_{t\leq T/\epsilon}\left\Vert \widetilde{F}(t)\right\Vert_{H^{s}} \Big)E_{s}(t) \bigg),
   \end{equation*}%
    which reduces to (\ref{energy-ineq}) if we cancel $E_{s}(t)$ from both sides of the equation.
\end{proof}


\end{document}